\newtheorem{thm}{Theorem}[section]
\newtheorem{lem}[thm]{Lemma}
\numberwithin{equation}{section}
\renewcommand\Re{\operatorname{Re}}
\def\F{\mathbb{F}_q}
\def\3F2{{}_3\hspace{-1pt}F_2}
\def\2F1{{}_2\hspace{-1pt}F_1}
\def\h2F1{{}_2\hspace{-1pt}\widehat{F}_1}
\def\f{\F^*}
\def\e{\varepsilon}
\def\lt{\lambda_2}
\def\olt{\overline{\lt}}
\def\lo{\lambda_1}
\def\olo{\overline{\lo}}
\def\no{\nu_1}
\def\ono{\overline{\no}}
\def\nt{\nu_2}
\def\oc4{\overline{\chi_4}}
\def\onu{\overline{\nu}}
\def\ol{\overline{\lambda}}
\def\omu{\overline{\mu}}
\def\oA{\overline{A}}
\def\oa4{\overline{A_4}}
\def\oaf{\overline{A}_4}
\def\oa8{\overline{A}_8}
\def\oB{\overline{B}}
\def\oC{\overline{C}}
\def\oD{\overline{D}}
\def\l({\left(}
\def\r){\right)}
\def\bar{\begin{array}{r|}}
\def\ear{\end{array}}
\def\Re{\mathrm{\ Re \,}}
\title{SOME MIXED CHARACTER SUM IDENTITIES OF KATZ} 
\author{\\ \\ Ron Evans\\
Department of Mathematics\\
University of California at San Diego\\
La Jolla, CA  92093-0112\\
revans@ucsd.edu
}
\date{May 2017}
\begin{document}
\maketitle

\noindent 2010 \textit{Mathematics Subject Classification}.
11T24, 33C05.

\noindent \textit{Key words and phrases}.
Hypergeometric $\2F1$ functions over finite fields, Gauss sums, Jacobi sums,
quadratic transformations, Hasse--Davenport relation, quantum physics.

\begin{abstract}
A conjecture connected with quantum physics led N. Katz to discover
some amazing mixed character sum identities over a field of $q$
elements, where $q$ is a power of a prime $p>3$.
His proof required deep algebro-geometric techniques,
and he expressed interest in finding a more straightforward direct proof.
Such a proof has been given by Evans and Greene in the case 
$q \equiv 3 \pmod 4$, and in this paper we give a proof for
the remaining case $q \equiv 1 \pmod 4$.
Moreover, we show that the identities are valid for all characteristics $p >2$.
\end{abstract}

\newpage

\maketitle

\section{Introduction}
Let $\F$ be a field of $q$ elements, 
where $q$ is a power of an odd prime $p$. 
Throughout this paper, $A$, $B$, $C$, $D$, 
$\chi$, $\lambda$, $\nu$, $\mu$, $\e$, $\phi$, $A_4$, $A_8$
denote complex multiplicative characters on $\f$, 
extended to map 0 to 0. 
Here $\e$ and $\phi$ always denote the trivial and quadratic characters, 
respectively, while $A_4$ denotes a fixed quartic character when
$q \equiv 1 \pmod4$ and $A_8$ denotes a fixed octic character such that
$A_8^2 = A_4$ when $q \equiv 1 \pmod8$. 
Define $\delta(A)$ to be 1 or 0 according as $A$ is trivial or not,
and let $\delta(j,k)$ denote the Kronecker delta.

For $y \in \F$, 
let $\psi(y)$ denote the additive character 
\begin{equation*}
\psi(y) := 
\exp \Bigg( \frac{2 \pi i}{p} \Big( y^p + y^{p^2} + \dots + y^q \Big) \Bigg).
\end{equation*}
Recall the definitions of the Gauss and Jacobi sums
\begin{equation*}
G(A) = \sum_{y \in \F} A(y) \psi(y), \quad
J(A, B) = \sum_{y \in \F} A(y) B(1-y).
\end{equation*}
These sums have the familiar properties 
\[
G(\e) = -1, \quad J(\e,\e) = q-2,
\]
and for nontrivial $A$, 
\[
G(A) G(\oA) = A(-1) q, \quad J(A, \oA) = -A(-1), 
\quad J(\e, A)=-1. 
\]
Gauss and Jacobi sums are related by \cite[p. 59]{BEW}
\begin{equation*}
J(A,B) = \frac{G(A) G(B)}{G(AB)}, \quad \text{if } AB \neq \e
\end{equation*}
and 
\begin{equation*}
J(A,\oC) = \frac{A(-1)G(A) G(\oA C)}{G(C)}=A(-1)J(A,\oA C), 
\quad \text{if } C \neq \e.
\end{equation*}
The Hasse--Davenport product relation \cite[p. 351]{BEW} yields
\begin{equation}\label{eq:1.1}
A(4) G(A) G(A \phi) = G(A^2) G(\phi).
\end{equation}

As in \cite[p. 82]{TAMS}, 
define the hypergeometric $\2F1$ function over $\F$ by
\begin{equation}\label{eq:1.2}
\2F1 \l( \bar A,B \\ C \ear \ x \r)
=\frac{\e (x)}{q}
\sum_{y \in \F} B(y) \oB C(y-1)
\oA(1-xy), \quad x \in \F.
\end{equation}

For $j, k \in \F$ and $a \in \f$, 
Katz \cite[p. 224]{Katz} defined the mixed exponential sum
\begin{equation}\label{eq:1.3}
\begin{split}
P(j,k): &= \delta(j,k) + \phi(-1)\delta(j,-k) + \\
&  \frac{1}{G(\phi)}\sum_{x \in \f}
\phi(a/x - x)\psi(x(j+k)^2 + (a/x)(j-k)^2).
\end{split}
\end{equation}
Note that $P(j,k)=P(k,j)$ and
$P(-j,k) = \phi(-1)P(j,k)$.
Katz proved an equidistribution conjecture of Wootters 
\cite[p. 226]{Katz}, \cite{ASSW}
connected with quantum physics
by constructing explicit character sums $V(j)$ 
 \cite[pp. 226--229]{Katz}) for which
the identities
\begin{equation}\label{eq:1.4}
P(j,k) = V(j)V(k)
\end{equation}
hold for all $j,k \in \F$.
(The $q$-dimensional vector 
$(V(j))_{j \in \F} $
is a minimum uncertainty state, as described by Sussman and
Wootters \cite{SW}.)
Katz's proof \cite[Theorem 10.2]{Katz} of the identities
\eqref{eq:1.4} required the characteristic $p$ to exceed 3,
in order to guarantee that various sheaves of ranks 2, 3, and 4
have geometric and arithmetic monodromy groups
which are SL(2), SO(3), and SO(4), respectively.

As Katz indicated in \cite[p. 223]{Katz}, his proof of
\eqref{eq:1.4} is quite complex, invoking the theory of
Kloosterman sheaves and their rigidity properties, as well as results
of Deligne \cite{Del} and Beilinson, Bernstein, Deligne \cite{BBD}.  
Katz \cite[p. 223]{Katz} wrote,
``It would be interesting to find direct proofs of these identities."

The goal of this paper is to respond to
Katz's challenge by giving a direct proof of \eqref{eq:1.4}
( a ``character sum proof"
not involving algebraic geometry) in the case $q \equiv 1 \pmod4$.
This has the benefit of making
the demonstration of his useful identities 
accessible to a wider audience of mathematicians and physicists.
Another advantage of our proof is that it works for
all odd characteristics $p$, including $p=3$.
As a bonus, we obtain some interesting character sum
evaluations in terms of Gauss sums; see for example
Theorems 2.1, 3.2, and 5.5.

Our method of proof is to show
(see Sections 4 and 6) 
that the Mellin transforms of
both sides of \eqref{eq:1.4} are equal.  A key feature of our proof
is the application in Lemma 5.1 of the following 
hypergeometric $\2F1$ transformation formula over $\F$
for $q \equiv 1 \pmod4$
\cite[Theorem 3]{EG} :
\begin{equation}\label{eq:1.5}
\2F1 \l( \bar D,DA_4 \\ A_4 \ear \ z^4 \r)
=\oD^4(z-1)
\2F1 \l( \bar D,D^2\phi \\ D\phi \ear \ -\left(\frac{z+1}{z-1} \right)^2 \r),
\end{equation}
which holds for every character $D$ on $\F$ and 
every $z \in \f$ with $z \notin \{1, -1\}$.
The proof of \eqref{eq:1.5}
depends on a recently proved finite field analogue
\cite[Theorem 2]{EG}, \cite[Theorem 17]{JLRST}
of a classical quadratic transformation formula of Gauss.

For $q \equiv 1 \pmod4$, Katz's character sums $V(j)$ are defined
in \eqref{eq:1.6}--\eqref{eq:1.7} below.
In the case
$q \equiv 3 \pmod4$,
the sums $V(j)$ have a more complex definition,
in that they are sums over $\mathbb{F}_{q^2}$ \cite[p. 228]{Katz}.
A direct proof of \eqref{eq:1.4} 
for the case $q \equiv 3 \pmod4$
has been given by Evans and Greene \cite{EG2}.

From here on, let $q \equiv 1 \pmod4$, so that 
there exists a primitive fourth root of unity $i \in \F$.
Thus $\phi(-1) = \phi(i^2) =1$ and
$A_4(-4) = A_4((1+i)^4) =1$.

For $a$ as in \eqref{eq:1.3}, define
\[
\tau = -\sqrt{qA_4(-a)},
\]
where the choice of square root is fixed.
For $q \equiv 1 \pmod4$, the sums $V(j)$ are defined as follows:
\begin{equation}\label{eq:1.6}
V(j): = \tau^{-1}\sum_{x \in \f} A_4(x)\psi(x+aj^4/x), \quad j \in \f,
\end{equation}
while for $j=0$,
\begin{equation}\label{eq:1.7}
V(0): = G(A_4)/\tau + \tau /G(A_4).
\end{equation}

\section{Mellin transform of the sums $V(j)$}

For any character $\chi$, define the Mellin transform
\begin{equation}\label{eq:2.1}
S(\chi): = \sum_{j \in \f} \chi(j)V(j).
\end{equation}
The next theorem gives an evaluation of $S(\chi)$ in terms of Gauss sums.

\begin{thm}
If $\chi$ is not a fourth power, then $S(\chi)=0$.  On the other hand,
if $\chi = \nu^4$, then
\begin{equation}\label{eq:2.2}
S(\chi)= 
\tau^{-1}\onu(a)\sum_{m=0}^3 A_4(a)^{1-m}G(\nu A_4^{m-1})G(\nu A_4^{m}).
\end{equation}
\end{thm}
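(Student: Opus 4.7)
The plan is to insert the definition \eqref{eq:1.6} of $V(j)$ into \eqref{eq:2.1} and interchange the order of summation, so that
\[
S(\chi)=\tau^{-1}\sum_{x\in\f}A_4(x)\psi(x)\,T(a/x),\qquad T(b):=\sum_{j\in\f}\chi(j)\psi(bj^4).
\]
The problem then reduces to evaluating the quartic Gauss-type sum $T(b)$ for $b\in\f$.

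To evaluate $T(b)$, I would parametrize by $y=j^4$. The map $j\mapsto j^4$ is four-to-one onto the subgroup of nonzero fourth powers, and for any fixed preimage $y_0$ of $y$ the full fiber is $\{i^m y_0\}_{m=0}^{3}$, contributing $\chi(y_0)\sum_{m=0}^{3}\chi(i)^m$. Since $i$ has order $4$ in $\f$, this geometric sum vanishes unless $\chi(i)=1$, i.e.\ unless $\chi$ is a fourth power in the character group. This immediately gives $S(\chi)=0$ when $\chi$ is not a fourth power. When $\chi=\nu^4$, one has $\chi(y_0)=\nu(y_0^4)=\nu(y)$, whence $T(b)=4\sum_{y\in(\f)^4}\nu(y)\psi(by)$. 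Expressing the indicator of nonzero fourth powers as $\tfrac14\sum_{m=0}^{3}A_4^m$ and invoking the standard identity $\sum_{y\in\f}\eta(y)\psi(by)=\overline{\eta}(b)\,G(\eta)$ (which holds for every $\eta$, including $\eta=\e$ via $G(\e)=-1$) yields
\[
T(b)=\sum_{m=0}^{3}\overline{\nu A_4^m}(b)\,G(\nu A_4^m).
\]

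Substituting $b=a/x$, factoring $\overline{\nu A_4^m}(a/x)=\overline{\nu}(a)\,A_4(a)^{-m}\,(\nu A_4^m)(x)$, and carrying out the $x$-summation as the Gauss sum $G(\nu A_4^{m+1})$ then gives
\[
S(\chi)=\tau^{-1}\overline{\nu}(a)\sum_{m=0}^{3}A_4(a)^{-m}\,G(\nu A_4^m)\,G(\nu A_4^{m+1}).
\]
A shift $m\mapsto m-1$, combined with the periodicities $A_4^4=\e$ and $A_4(a)^4=1$ that let one bring the index range back to $\{0,1,2,3\}$, recasts this expression in the form \eqref{eq:2.2}. No step seems genuinely difficult; the only points requiring a brief check are the trivial case $\eta=\e$ of the Gauss-sum identity and the observation that the final formula is independent of the chosen fourth root $\nu$ of $\chi$, which follows from the same periodicity used in the reindexing.
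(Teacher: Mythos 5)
Your proof is correct, and it takes a genuinely different (and cleaner) route than the paper's. The paper first substitutes $x\mapsto j^2x$ inside $V(j)$ so that $j$ enters only through $\psi(j^2(x+a/x))$, then sums over $j$ first; this produces $G(\lambda)Y(\lambda)+G(\lambda\phi)Y(\lambda\phi)$ with $Y(\lambda)=\sum_x A_4(x)\ol(x+a/x)$, which is then evaluated via Jacobi sums. The price of that substitution is the degenerate locus $x+a/x=0$: when $\lambda^2$ is trivial (i.e.\ $\chi=\phi$, forcing $q\equiv1\pmod 8$) those terms contribute, and the paper needs a separate computation with the octic character $A_8$ followed by a check that the answer still matches \eqref{eq:2.2}. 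You instead keep the additive character factored as $\psi(x)\psi((a/x)j^4)$, sum over $j$ first by writing the fourth-power fiber count as $\tfrac14\sum_{m}A_4^m$, and reduce everything to the single orthogonality relation $\sum_{y\in\f}\eta(y)\psi(by)=\overline{\eta}(b)G(\eta)$ (valid for all $\eta$, including $\eta=\e$, since $b=a/x\neq0$). No degenerate terms ever arise, so there is no case split, and the $x$-sum collapses to $G(\nu A_4^{m+1})$ in one step; the shift $m\mapsto m-1$ then gives \eqref{eq:2.2} exactly. The first assertion ($S(\chi)=0$ for $\chi$ not a fourth power) you obtain from $\chi(i)\neq1$ just as the paper does via $V(j)=V(ji)$. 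Your argument is uniform in $\chi$ and avoids both the Jacobi-sum manipulations and the $A_8$ case entirely, at no loss of generality.
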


\begin{proof}
By \eqref{eq:1.6}, 
\[
V(j)=V(ji), \quad j \in \f.
\]
Thus $S(\chi)=0$ when $\chi(i) \ne 1$, i.e., when $\chi$ is not a
fourth power.

Now set $\chi = \nu^4$ for some character $\nu$,
and write $\lambda = \nu^2 \oaf$.  By \eqref{eq:1.6}, 
\[
\tau V(j) = \phi(j)\sum_{x \in \f}A_4(x)\psi(j^2(x+a/x)),
\]
so
\begin{equation}\label{eq:2.3}
\begin{split}
\tau S(\chi) &= \sum_{j,x \in \f} \lambda(j^2) A_4(x) \psi(j^2 (x+a/x))\\
&=\sum_{j,x \in \f} \lambda(j)A_4(x)\psi(j(x+a/x))(1+\phi(j)).
\end{split}
\end{equation}

First suppose that $\lambda^2$ is nontrivial.
Then for the sums in \eqref{eq:2.3}, there is no
contribution from the terms where $x+a/x=0$.   Thus
\begin{equation}\label{eq:2.4}
\tau S(\chi) = G(\lambda)Y(\lambda) + G(\lambda \phi)Y(\lambda \phi),
\end{equation}
where
\begin{equation}\label{eq:2.5}
Y(\lambda): = \sum_{x \in \f} A_4(x) \ol(x+a/x).
\end{equation}
By \eqref{eq:2.5},
\begin{equation*}
\begin{split}
Y(\lambda) &= Y(\nu^2 \oaf)=\sum_{x \in \f} A_4(x) \onu^2 A_4(x+a/x)=
\sum_{x \in \f} \nu(x^2) \onu^2 A_4(x^2+a)\\
&= \sum_{x \in \f} \nu(x) \onu^2 A_4(x+a) (1 + \phi(x))\\
&= \onu(-a)\{A_4(a)J(\nu, \onu^2 A_4) + \oaf (a)J(\nu\phi, \onu^2 A_4)\}.
\end{split}
\end{equation*}
Since $J(B,C) = B(-1)J(B,\overline{BC})$ for all characters $B$, $C$,
we see that
\[
J(\nu, \onu^2 A_4)= \nu(-1) J(\nu, \nu \oaf), \quad
J(\nu\phi, \onu^2 A_4)= \nu(-1) J(\nu \phi, \nu A_4).
\]
Thus
\begin{equation}\label{eq:2.6}
Y(\lambda)=\onu(a)\{A_4(a)J(\nu, \nu \oaf) + \oaf (a)J(\nu\phi, \nu A_4)\}.
\end{equation}
Similarly, we have
\begin{equation}\label{eq:2.7}
Y(\lambda \phi)=Y(\nu^2 A_4)=
\onu(a)\{J(\nu, \nu A_4) + \phi(a)J(\nu\phi, \nu \oaf)\}.
\end{equation}
Putting \eqref{eq:2.6}--\eqref{eq:2.7} into \eqref{eq:2.4},
we easily see that
\eqref{eq:2.2} holds in the case that $\lambda^2$ is nontrivial.

Finally, assume that $\lambda^2$ is trivial, so that 
$\nu^4 = \chi=\phi$.  Then
$q \equiv 1 \pmod 8$ and $\nu$ is an odd power of $A_8$.
By \eqref{eq:2.3},
\begin{equation}\label{eq:2.8}
\begin{split}
\tau S(\chi) &= \sum_{j \in \F}\sum_{x \in \f} A_4(x) \psi(j^2 (x+a/x))\\
&= q \sum_{\substack{x \in \f  \\ x+a/x = 0}}A_4(x) \
+  \ G(\phi)\sum_{x \in \f} A_4(x) \phi(x+a/x).
\end{split}
\end{equation}
The first of the two terms on the far right of \eqref{eq:2.8}
vanishes when $\phi(-a)=-1$,
and so this term equals $q(A_8(-a) + A_8^5(-a))$.  
Thus \eqref{eq:2.8} yields
\begin{equation*}
\begin{split}
\tau S(\chi) &= q(A_8(-a) + A_8^5(-a))+
G(\phi) \sum_{x \in \F} \oa8 (x)\phi(a+x)(1+\phi(x))\\
&= q(A_8(-a) + A_8^5(-a))+
G(\phi) \{A_8^3(-a)J(\oa8, \phi) + \oa8(-a) J(A_8^3, \phi)\}.
\end{split}
\end{equation*}
It follows that
\begin{equation}\label{eq:2.9}
\begin{split}
\tau S(\chi) &= A_8(a)G(A_8)G(\oa8) + A_8^5(a)G(A_8^3)G(\oa8^3)\\
&+ A_8^3(a)G(\oa8)G(\oa8^3) +\oa8 (a)G(A_8)G(A_8^3).
\end{split}
\end{equation}
No matter which odd power of $A_8$ is substituted for $\nu$
in \eqref{eq:2.2}, we see that \eqref{eq:2.2} matches \eqref{eq:2.9}.
Thus the proof of \eqref{eq:2.2} is complete.
\end{proof}

\section{Mellin transform of the sums $P(j,0)$}

For any character $\chi$, define the Mellin transform
\begin{equation}\label{eq:3.1}
T(\chi): = \sum_{j \in \f} \chi(j)P(j,0).
\end{equation}
Theorem 3.2 below gives an evaluation of $T(\chi)$ in terms of Gauss sums.
We will need the following lemma.

\begin{lem}
When $\nu^4$ is nontrivial,
\begin{equation*}
\2F1 \l( \bar \nu^2,\nu A_4 \\ \nu \oaf \ear \ -1 \r) =
\frac{A_4(-1)G(\nu A_4)}{qG(\phi)G(\nu^2)}\{G(\nu)G(A_4)+G(\nu \phi)G(\oaf)\}.
\end{equation*}
\end{lem}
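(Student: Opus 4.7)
The plan is to unfold the $\2F1$ via \eqref{eq:1.2}, perform a single Möbius substitution to reduce the sum to one depending only on $v^2$, and then split into Jacobi sums that collapse via the Hasse--Davenport relation. With $A = \nu^2$, $B = \nu A_4$, $C = \nu\oaf$, and $x = -1$ in \eqref{eq:1.2}, the identity $\overline{\nu A_4}\cdot \nu\oaf = \overline{A_4}^2 = \phi$ (together with $\phi(-1) = 1$) rewrites the left-hand side as $\frac{1}{q}\sum_{y \in \F} \nu A_4(y)\phi(1-y)\onu^2(1+y)$.

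Next I would substitute $y = (v-1)/(v+1)$, so that $1+y = 2v/(v+1)$ and $1-y = 2/(v+1)$; the three $(v+1)$-factors consolidate through the key identity $\overline{\nu A_4}\cdot\phi\cdot\nu^2 = \nu A_4$, and pulling $\nu A_4(-1)$ out of $\nu A_4(v^2-1)$ gives
\[
qF = \phi(2)\onu^2(2)\nu A_4(-1)\sum_{v \in \F} \onu^2(v)\nu A_4(1-v^2).
\]
To evaluate this inner sum, I would pair $v$ with $-v$ (equal contributions since $\nu^2(-1) = 1$) and parameterize nonzero squares by $u = v^2$, using $\onu^2(\sqrt u) = \onu(u)$; the indicator $\tfrac12(1 + \phi(u))$ of nonzero squares then splits the sum as $J(\onu, \nu A_4) + J(\onu\phi, \nu A_4)$. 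This evaluates through $J(B,C) = G(B)G(C)/G(BC)$, using $\onu \cdot \nu A_4 = A_4$ and $\onu\phi \cdot \nu A_4 = \oaf$, to $G(\nu A_4)\{G(\onu)/G(A_4) + G(\onu\phi)/G(\oaf)\}$.

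Finally, to match the stated right-hand side, I would apply the Hasse--Davenport relation \eqref{eq:1.1} with $A = \nu$ to substitute $G(\phi)G(\nu^2) = \nu(4)G(\nu)G(\nu\phi)$, use $G(\chi)G(\ochi) = \chi(-1)q$ to convert $G(\onu)$ and $G(\onu\phi)$ into $G(\nu)$ and $G(\nu\phi)$, and invoke the equality $A_4(-1) = \phi(2)$, which follows from the identity $A_4(-4) = 1$ already recorded in the introduction. The factors $\onu^2(2) = \onu(4)$, $\nu(4)$, $\phi(2)$, $A_4(-1)$, and $\nu(-1)^2$ then cancel in a clean cascade, turning the two terms of the inner sum into $G(\nu\phi)G(\oaf) + G(\nu)G(A_4)$ and yielding the claimed formula. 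The main obstacle is this character-factor bookkeeping: the substitution step succeeds only because the three characters consolidate via $\overline{\nu A_4}\phi\nu^2 = \nu A_4$, and the final cancellation relies on the delicate equality $A_4(-1) = \phi(2)$ specific to $q \equiv 1 \pmod 4$.
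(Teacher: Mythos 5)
Your proof is correct, but it takes a genuinely different route from the paper's. The paper disposes of this lemma in one line by quoting two results of Greene: Corollary 3.21 of \cite{TAMS} to permute the numerator parameters $\nu^2$ and $\nu A_4$, and then the finite-field analogue (4.11) of Kummer's summation theorem to evaluate the resulting ${}_2F_1$ at $-1$ in closed form. You instead work directly from the definition \eqref{eq:1.2}: the M\"obius substitution $y=(v-1)/(v+1)$, the consolidation of the $(v+1)$-factors via $\overline{\nu A_4}\,\phi\,\nu^2=\nu A_4$, the split of the sum over squares into $J(\onu,\nu A_4)+J(\onu\phi,\nu A_4)$, and the final cleanup using the Hasse--Davenport relation \eqref{eq:1.1} and $\phi(2)=A_4(-1)$ (which indeed follows from $A_4(-4)=1$) all check out, including the degenerate terms $y=1$, $v=\pm1$, $v=0$ that vanish because $\nu$, $\nu\phi$, $\nu A_4$, $\nu\oaf$, $\nu^2$ are all nontrivial when $\nu^4\ne\e$. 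In effect you have reproved the relevant instance of the Kummer summation from scratch; what your argument buys is self-containedness (no reliance on Greene's Corollary 3.21 or (4.11), only on facts already stated in Section 1), at the cost of the character-bookkeeping you acknowledge, while the paper's citation-based proof is shorter and makes clear that the lemma is a standard specialization of known finite-field hypergeometric identities.
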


\begin{proof}
This follows by first permuting the numerator parameters by means of
\cite[Corollary 3.21]{TAMS}
and then applying a finite field analogue \cite[(4.11)]{TAMS}
of a classical summation formula of Kummer.
\end{proof}

\begin{thm}
If $\chi$ is not a fourth power, then $T(\chi)=0$.  On the other hand,
if $\chi = \nu^4$, then
\begin{equation}\label{eq:3.2}
\begin{split}
T(\chi)&= q^{-1}A_4(-1)\{\oaf(a)G(A_4) + G(\oaf)\} \ \times \\
&\times \onu(a) \sum_{m=0}^3 A_4(a)^{1-m}G(\nu A_4^m)G(\nu A_4^{m-1}).
\end{split}
\end{equation}
\end{thm}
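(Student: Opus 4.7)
The plan is to compute $T(\chi)$ directly by swapping the order of summation and extracting the inner $j$-sum as Gauss sums in $x+a/x$, producing a one-variable character sum that can be recognized as $\2F1$ values and evaluated by Lemma~3.1. Since the Kronecker-delta terms of \eqref{eq:1.3} vanish for $j \in \f$, one has
\[ P(j,0) = \frac{1}{G(\phi)}\sum_{x \in \f}\phi(a/x - x)\,\psi\bigl((x+a/x)j^{2}\bigr). \]
For the vanishing assertion, the simultaneous substitutions $j \mapsto ij$ and $x \mapsto -x$ preserve $\phi(a/x-x)$ (using $\phi(-1)=1$) and fix the argument of $\psi$, so $T(\chi)=\chi(i)\,T(\chi)$, forcing $T(\chi)=0$ whenever $\chi(i)\neq1$, i.e., whenever $\chi$ is not a fourth power.

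Write $\chi=\nu^{4}$. Swapping summation and using $\chi(j)=\nu^{2}(j^{2})$, parametrising by $s=j^{2}$ and picking out squares via $1+\phi(s)$, one obtains, for $u:=x+a/x\neq0$,
\[ \sum_{j \in \f}\chi(j)\,\psi(uj^{2}) = \overline{\nu^{2}}(u)G(\nu^{2}) + \overline{\nu^{2}\phi}(u)G(\nu^{2}\phi), \]
so that
\[ G(\phi)\,T(\chi) = G(\nu^{2})\,U(\nu^{2}) + G(\nu^{2}\phi)\,U(\nu^{2}\phi), \qquad U(\mu) := \sum_{x \in \f}\phi(a/x-x)\,\overline{\mu}(x+a/x). \]
To evaluate $U(\mu)$ I would rewrite $\phi(a/x-x)=\phi(x)\phi(a-x^{2})$ and $\overline{\mu}(x+a/x)=\mu(x)\overline{\mu}(a+x^{2})$. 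In both cases $\mu\in\{\nu^{2},\nu^{2}\phi\}$ one has $\mu\phi(-1)=1$, so $\mu\phi(x)$ is a function of $x^{2}$; substituting $y=x^{2}$ (doubling squares via $1+\phi(y)$) and then rescaling $y=at$ expresses each $U(\mu)$ as a linear combination of two $\2F1(\cdot,\cdot;\cdot;-1)$ values, weighted by $\onu(a)$ times one of $1,A_{4}(a),\oaf(a),\phi(a)$.

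Each of the four resulting $\2F1$'s is precisely of the shape handled by Lemma~3.1 after replacing $\nu$ by one of the four characters $\nu,\nu\phi,\nu\oaf,\nu A_{4}$, since these substitutions cycle the parameter triple $(\overline{\nu^{2}},\nu A_{4};\nu\oaf)$ through all the patterns that arise. Applying Lemma~3.1 in each case, the factors $G(\nu^{2})$ and $G(\nu^{2}\phi)$ cancel against the denominators, the identity $G(\phi)^{2}=q$ absorbs the remaining $G(\phi)$, and one is left with a sum of eight products of three Gauss sums, weighted by $\onu(a)$ and the above four $a$-factors. The main obstacle is then the purely organizational step of verifying that this eight-term sum factors as
\[ \{\oaf(a)G(A_{4}) + G(\oaf)\}\sum_{m=0}^{3} A_{4}(a)^{1-m}G(\nu A_{4}^{m})G(\nu A_{4}^{m-1}), \]
which is a straightforward distributive check once the eight terms are paired correctly; combined with the global prefactor $q^{-1}A_{4}(-1)\,\onu(a)$, this is \eqref{eq:3.2}. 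The case $\chi=\e$ requires a small separate verification, since the Gauss-sum identity for $\sum_{j}\chi(j)\psi(uj^{2})$ acquires an extra contribution from $u=0$ (equivalently, $x^{2}=-a$) only in that case, which must be checked to agree with the claimed formula.
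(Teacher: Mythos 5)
Your proposal is correct and follows essentially the same route as the paper: the same decomposition of $G(\phi)T(\chi)$ into Gauss sums times one-variable sums in $x+a/x$, the same conversion to $\2F1(\,\cdot\,;-1)$ values, and the same application of Lemma~3.1 under the four substitutions $\nu\mapsto\nu,\nu\phi,\nu A_4,\nu\oaf$ (your $j\mapsto ij$, $x\mapsto -x$ vanishing argument is a slightly slicker version of the paper's). One small point: the separate verification needed for $\chi=\e$ is forced not only by the $u=0$ contribution you mention but also because Lemma~3.1 requires $\nu^4$ nontrivial, so in that case the paper instead computes the degenerate sums directly via Jacobi sums.
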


\begin{proof}
By \eqref{eq:3.1} and \eqref{eq:1.3},
\[
G(\phi)T(\chi) = \sum_{j,x \in \f}
\phi(x - a/x) \chi(j) \psi(j^2 (x +a/x)).
\]
Therefore $T(\chi)=0$ unless $\chi$ is a square, so suppose that
$\chi = (\lambda A_4)^2 $
for some character $\lambda$.  Then
\begin{equation}\label{eq:3.3}
\begin{split}
G(\phi)T(\chi) &= \sum_{j,x \in \f} 
\phi(x - a/x) \lambda A_4(j)\psi(j(x +a/x)) (1 + \phi(j)) \\
&= 
U(\lambda) +W(\lambda) +W(\lambda \phi),
\end{split}
\end{equation}
where
\begin{equation}\label{eq:3.4}
U(\lambda) = \sum_{\substack{x \in \f  \\ x+a/x = 0}}
\phi(x - a/x)\sum_{j \in \f}
(\lambda A_4(j) + \lambda \oaf(j))
\end{equation}
and
\begin{equation}\label{eq:3.5}
W(\lambda)= G(\lambda A_4)\sum_{x \in \f}
\phi(x - a/x) \ol \oaf (x+a/x).
\end{equation}
Thus $T(\chi)=0$ unless $\lambda A_4=\nu^2$ for some
character $\nu$, i.e., unless $\chi = \nu^4$.   This proves the
first part of Theorem 3.2.
For the remainder of this proof, assume that 
\[
\lambda A_4 = \nu^2, \quad \chi = \nu^4.
\]

First consider the case where $\chi = \nu^4$ is nontrivial.
Then $U(\lambda)=0$, since $\lambda A_4$ 
and $\lambda \oaf$ are nontrivial.

We have
\begin{equation}\label{3.6}
\begin{split}
&W(\lambda)/G(\nu^2) = 
\sum_{x \in \f} \phi(x - a/x) \onu^2(x + a/x)\\
&= \sum_{x \in \f}\nu^2 \phi(x) \phi(a - x^2) \onu^2 (a + x^2)\\
&= \sum_{x \in \f} \nu A_4(x)\phi(a-x)\onu^2(a+x)(1 + \phi(x)) \\
&= q \onu \oaf (a)  
\2F1 \l( \bar \nu^2,\nu A_4 \\ \nu \oaf \ear \ -1 \r) +
q\onu A_4 (a) 
\2F1 \l( \bar \nu^2,\nu \oaf \\ \nu A_4 \ear \ -1 \r).
\end{split}
\end{equation}
By Lemma 3.1,
\begin{equation}\label{eq:3.7}
\begin{split}
&W(\lambda) = \\
&q^{-1}A_4(-1)G(\nu A_4)G(\phi) \onu \oaf (a)
\{G(\nu)G(A_4)+G(\nu \phi)G(\oaf)\} \\
+ \ & q^{-1}A_4(-1)G(\nu \oaf)G(\phi) \onu A_4 (a)
\{G(\nu \phi)G(A_4)+G(\nu)G(\oaf)\} .
\end{split}
\end{equation}
Similarly,
\begin{equation}\label{eq:3.8}
\begin{split}
&W(\lambda \phi) =\\ 
&q^{-1}A_4(-1)G(\nu \phi)G(\phi) \onu \phi(a)
\{G(\nu A_4)G(A_4)+G(\nu \oaf)G(\oaf)\} \\
+ \ & q^{-1}A_4(-1)G(\nu )G(\phi) \onu (a)
\{G(\nu \oaf)G(A_4)+G(\nu A_4 )G(\oaf)\} .
\end{split}
\end{equation}
By \eqref{eq:3.3}, \eqref{eq:3.7}, and \eqref{eq:3.8},
we arrive at the desired result \eqref{eq:3.2}
in the case where $\chi = \nu^4$ is nontrivial.

Finally, suppose that $\chi=\nu^4$ is trivial, 
so that either $\lambda A_4 =\e$
or $\lambda A_4 =\phi$.
We must show that
\begin{equation}\label{eq:3.9}
\begin{split}
\sum_{j \in \f}P(j,0) &= q^{-1}A_4(-1)\{\oaf(a)G(A_4) + G(\oaf)\} \ \times \\
&\times \sum_{m=0}^3 A_4(a)^{1-m}G(A_4^m)G(A_4^{m-1}).
\end{split}
\end{equation}
Straightforward computations show that
\begin{equation}\label{eq:3.10}
U(\lambda) = (q-1)(A_4(a) + \oaf(a))
\end{equation}
and
\begin{equation}\label{eq:3.11}
\begin{split}
W(\lambda) + W(\lambda \phi)=& 
A_4(a) + \oaf(a) - A_4(a)J(\oaf, \phi) - \oaf(a)J(A_4, \phi)\\
& -2G(\phi) +\phi(a)J(A_4,\phi)G(\phi) + \phi(a) J(\oaf, \phi)G(\phi).
\end{split}
\end{equation}
The desired result \eqref{eq:3.9} now follows from
\eqref{eq:3.3}, \eqref{eq:3.10}, and \eqref{eq:3.11}.
\end{proof}

\section{Proof of  (1.4) when $jk=0$}

We first consider the case where $j=k=0$.  By \eqref{eq:1.3},
\begin{equation*}
\begin{split}
G(\phi)(P(0,0)-2) & =   \sum_{x \in \f}\phi(x)\phi(x^2 -a)=
\sum_{u \in \f}A_4(u)\phi(u-a)(1+\phi(u))\\
& = 2\Re{\oaf (a) J(A_4,\phi)} = 2\Re{A_4(4/a)J(A_4,A_4)},
\end{split}
\end{equation*}
where the last equality follows from the Hasse--Davenport formula (1.1).
Dividing by $G(\phi)$ and using the fact that $A_4(4)=A_4(-1)$, we have
\[
P(0,0)=2 + 2\Re{G(A_4)^2/(qA_4(-a))}.
\]
It now follows easily from \eqref{eq:1.7} that $P(0,0) = V(0)^2$.

To complete the proof of \eqref{eq:1.4} for $jk=0$, it 
remains to prove that 
\begin{equation}\label{eq:4.1}
P(j,0) = V(0)V(j), \quad j \in \f,
\end{equation}
since $P$ is symmetric in its two arguments.
By Theorems 2.1 and 3.2, the Mellin transforms of the left
and right sides of \eqref{eq:4.1} are the same for all characters.
By taking inverse Mellin transforms, we see that \eqref{eq:4.1}
holds, so the proof of \eqref{eq:1.4} for $jk=0$ is complete.

\section{Double Mellin Transform  of $P(j,k)$}

For characters $\chi_1, \chi_2$, define the double Mellin transform
\begin{equation}\label{eq:5.1}
T=T(\chi_1, \chi_2): = \sum_{j, k \in \f} \chi_1(j)\chi_2(k)P(j,k).
\end{equation}
Note that $T(\chi_1, \chi_2)$ is symmetric in $\chi_1$, $\chi_2$.
In this section we will evaluate $T$.
Theorem 5.4 shows that $T=0$ when $\chi_1$ and $\chi_2$
are not both fourth powers.  Theorem 5.5 evaluates $T$
when $\chi_1$ and $\chi_2$  are both fourth powers.

Since $P(j,k)=P(-j,k)$, we have $T=0$ unless  
$\chi_1$ and $\chi_2$ are squares, so we set
\begin{equation}\label{eq:5.2}
\chi_i = (\lambda_i A_4)^2 = \lambda_i^2 \phi, \quad i=1,2
\end{equation}
for characters $\lambda_i$ (which are 
well-defined up to factors of $\phi$).

From the definitions of $T$ and $P(j,k)$, we have
\begin{equation*}
\begin{split}
&G(\phi)\{T - (2q-2)\delta(\lo^2\lt^2)\}= \\
&\sum_{j,k,x \in \f} \lo^2\phi(j) \lt^2\phi(k)\phi(x - a/x)
\psi(x(j+k)^2 +a(j-k)^2/x).
\end{split}
\end{equation*}

Replace $j$ by $jk$ to obtain
\begin{equation}\label{eq:5.3}
\begin{split}
&G(\phi)\{T - (2q-2)\delta(\lo^2\lt^2)\}= \\
&\sum_{j,k,x \in \f} \lo^2\phi(j) \lo^2\lt^2(k) \phi(x - a/x)
\psi(k^2\alpha(j,x)) = \\
&\sum_{j,k,x \in \f} \lo^2\phi(j) \lo\lt(k) \phi(x - a/x)
\psi(k\alpha(j,x))(1+\phi(k)),
\end{split}
\end{equation}
where
\begin{equation}\label{eq:5.4}
\alpha(j,x) := x(j+1)^2 +a(j-1)^2/x.
\end{equation}
Note that $\alpha(j,x)$ cannot vanish when $j = \pm 1$.
By \eqref{eq:5.3}.
\begin{equation}\label{eq:5.5}
\begin{split}
&G(\phi)\{T - (2q-2)\delta(\lo^2\lt^2)\}= \\
&\delta(\lo^2\lt^2)(q-1)H(\lo)
+G(\lo\lt)E(\lo, \lt)+G(\lo\lt\phi)E(\lo, \lt\phi),
\end{split}
\end{equation}
where
\begin{equation}\label{eq:5.6}
H(\lo):= \sum_{\substack{j,x \in \f \\ \alpha(j,x)=0}}
\lo^2\phi(j)\phi(x-a/x)
\end{equation}
and
\begin{equation}\label{eq:5.7}
E(\lo, \lt): = \sum_{j,x \in \f}
\lo^2\phi(j)\phi(x-a/x)\olo\olt(\alpha(j,x)).
\end{equation}

For a character $D$
and $j \in \f$, define
\begin{equation}\label{eq:5.8}
h(D, j):=\sum_{x \in \f}D(x)\phi(1-x)
\oD^2\phi(x(j+1)^2+(j-1)^2).
\end{equation}

By \eqref{eq:5.7},
\begin{equation}\label{eq:5.9}
E(\lo, \lt) = \sum_{j \in \f} \chi_1(j)\beta(\lo,\lt,j),
\end{equation}
where
\begin{equation}\label{eq:5.10}
\beta(\lo,\lt,j):=\sum_{x \in \f} \phi(a-x^2)\lo\lt\phi(x)
\olo\olt(x^2(j+1)^2 + a(j-1)^2).
\end{equation}
If $\lo\lt\phi$ is odd, 
i.e., if $\chi_1 \chi_2=\lo^2 \lt^2$ is not a fourth power, then
we'd have
$\delta(\lo^2 \lt^2)=0$ and
\begin{equation}\label{eq:5.11}
\beta(\lo,\lt,j)=\beta(\lo,\lt \phi,j)=0,
\end{equation}
so that $T=0$ by \eqref{eq:5.5}.
Thus we may assume that
\begin{equation}\label{eq:5.12}
\lo\lt\phi= \mu^2
\end{equation}
for some character $\mu$ which is well defined up to factors of $A_4$. 
By \eqref{eq:5.10},
\begin{equation}\label{eq:5.13}
\begin{split}
\beta(\lo,\lt,j)&=\sum_{x \in \f}\phi(a-x)\omu^2\phi(x(j+1)^2+a(j-1)^2)
\{\mu(x) +  \mu\phi(x)\}\\
&= \omu(a)h(\mu, j) + \omu\phi(a)h(\mu\phi, j)
\end{split}
\end{equation}
and 
(by replacing $\lt$ by $\lt\phi$)
\begin{equation}\label{eq:5.14}
\begin{split}
\beta(\lo,\lt \phi,j)&=\sum_{x \in \f}\phi(a-x)\omu^2(x(j+1)^2+a(j-1)^2)
\{\mu A_4(x) +  \mu \oaf(x)\}\\
&= \omu \oaf(a)h(\mu A_4, j) + \omu A_4(a)h(\mu \oaf, j).
\end{split}
\end{equation}
Thus  \eqref{eq:5.5} is equivalent to
\begin{equation}\label{eq:5.15}
\begin{split}
&G(\phi)\{T - (2q-2)\delta(\mu^4)\}= 
\delta(\mu^4)(q-1)H(\lo)\\
&+\omu(a)\sum_{m=0}^3 G(\mu^2\phi^{m+1})\oaf^m(a)
\sum_{j \in \f} \chi_1(j)h(\mu A_4^m, j).
\end{split}
\end{equation}

For $D \in \{\e, A_4, \oaf \}$ and $j \in \f$,
we can evaluate $h(D,j)$ directly
from definition \eqref{eq:5.8}, as follows:

\begin{equation}\label{eq:5.16}
h(\e, j) = -2 +q\delta(j^2,-1) + \delta(j^2,1),
\end{equation}

\begin{equation}\label{eq:5.17}
h(A_4, j) = J(A_4, \phi) - \phi(j^4 -1),
\end{equation}

\begin{equation}\label{eq:5.18}
h(\oaf, j) = J(\oaf, \phi) - \phi(j^4 -1).
\end{equation}

For $D \notin \{\e, A_4, \oaf \}$,
the following lemma expresses $h(D,j)$ in terms of a 
hypergeometric character sum.  

\begin{lem}
For $D \notin \{\e, A_4, \oaf \}$ and $j \in \f$,
\begin{equation}\label{eq:5.19}
h(D,j) = \frac{G(D)^2 G(\phi)}{G(D^2\phi)}
\2F1 \l( \bar D, DA_4 \\ A_4 \ear \ j^4 \r).
\end{equation}
\end{lem}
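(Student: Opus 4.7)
The plan is to reduce Lemma 5.1 to a single character-sum identity and verify that identity by an involutive substitution. Throughout I take $j \in \f$ with $j^2 \neq 1$ and set $r = ((j-1)/(j+1))^2 \in \f$; the degenerate cases $j = \pm 1$ are handled separately at the end, since there $(j\pm 1)^2 = 0$ forces $h(D,\pm 1)$ to collapse to a single Jacobi sum matched to the right-hand side via the Hasse--Davenport relation \eqref{eq:1.1}.

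First I would recast $h(D,j)$ via the substitution $x = (j-1)^2/((j+1)^2 y)$ in \eqref{eq:5.8}. Using $\phi((j\pm 1)^2) = 1$, the identity $D((j-1)^2)\oD((j+1)^2)\oD((j-1)^4) = \oD((j^2-1)^2)$, and the collapse $\oD(y)D^2\phi(y)\phi(y) = D(y)$, one finds
\[
h(D,j) = \oD((j^2-1)^2)\sum_{y} D(y)\phi(y-r)\oD^2\phi(y+1).
\]
I would then expand both $\phi(y-r)$ and $\oD^2\phi(y+1)$ using the Fourier identity $\chi(z) = G(\chi)^{-1}\sum_s \chi(s)\psi(sz)$ (valid for nontrivial $\chi$, and here both $\phi$ and $D^2\phi$ are nontrivial since $D \notin \{\e,A_4,\oaf\}$), execute the inner sum over $y$ to produce a factor $\oD(s+t)G(D)$, and make the successive substitutions $u = s+t$ and $t = uv$. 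After $\phi^2 = \e$ cancellations and one more use of the Fourier identity on the $u$-sum, this yields
\[
h(D,j) = \oD((j^2-1)^2)\cdot\frac{G(D)^2}{G(\phi)G(D^2\phi)}\sum_v D^2\phi(v)\phi(1-v)\oD((1+r)v-r).
\]

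Simultaneously, on the right-hand side of Lemma 5.1 I would apply \eqref{eq:1.5} with $z=j$, expand the resulting $\2F1(D,D^2\phi;D\phi;-1/r)$ via \eqref{eq:1.2}, factor $\oD(1+y/r) = \oD(y+r)D(r)$, and combine $\oD((j-1)^4)D(r) = \oD((j^2-1)^2)$. Using $G(\phi)^2 = q$, this produces
\[
\frac{G(D)^2 G(\phi)}{G(D^2\phi)}\2F1(D,DA_4;A_4;j^4) = \oD((j^2-1)^2)\cdot\frac{G(D)^2}{G(\phi)G(D^2\phi)}\sum_y D^2\phi(y)\oD(y-1)\oD(y+r).
\]
Comparing this with the formula for $h(D,j)$ obtained above, the lemma is equivalent to the single identity
\[
\sum_v D^2\phi(v)\phi(1-v)\oD((1+r)v-r) = \sum_y D^2\phi(y)\oD(y-1)\oD(y+r).
\]

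The main obstacle is this last identity, because its two sides carry different triples of characters ($D^2\phi,\phi,\oD$ versus $D^2\phi,\oD,\oD$), so no affine change of variable relates them. The key step is the involution $v \leftrightarrow w$ given by $v = (w+1)/w$. Under this substitution $D^2\phi(v)$ becomes $D^2\phi(w+1)\oD^2\phi(w)$, $\phi(1-v) = \phi(-1/w)$ becomes $\phi(w)$, and since $(1+r)v-r = (w+1+r)/w$ we get $\oD((1+r)v-r) = \oD(w+1+r)D(w)$. The $\phi$ factors on $w$ collapse via $\phi^2 = \e$ and the $D$ factors via $\oD^2(w)D(w) = \oD(w)$, so the transformed summand becomes $D^2\phi(w+1)\oD(w)\oD(w+1+r)$, which is exactly the right-hand summand after relabeling $y = w+1$. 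The points where the Mobius map is undefined contribute nothing on either side, since a character is then evaluated at $0$.
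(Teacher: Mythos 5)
Your argument is correct, and its crux coincides with the paper's: both proofs reduce the lemma to the quadratic transformation \eqref{eq:1.5} applied at $z=j$, with the degenerate points $j=\pm1$ treated separately. I verified your key steps: the substitution $x=(j-1)^2/((j+1)^2y)$ does give $h(D,j)=\oD((j^2-1)^2)\sum_y D(y)\phi(y-r)\oD^2\phi(y+1)$; the Gauss-sum Fourier expansion legitimately produces the factor $G(D)^2/(G(\phi)G(D^2\phi))$ (both $\phi$ and $D^2\phi$ are nontrivial precisely because $D\notin\{\e,A_4,\oaf\}$, and the degenerate fibers $s+t=0$ contribute nothing); and the involution $v=(w+1)/w$ carries one summand to the other exactly as claimed, using $\phi(-1)=1$ and the fact that the excluded points of the M\"obius map support only vanishing terms. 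Where you genuinely diverge from the paper is in how $h(D,j)$ is connected to the right-hand side of \eqref{eq:1.5}: the paper reads $h(D,j)$ off from \eqref{eq:1.2} as $q\,\oD^4(j-1)\,\2F1\l(\bar D^2\phi, D\\ D\phi\ear -\left(\tfrac{j+1}{j-1}\right)^2\r)$ and then cites Greene's numerator-parameter permutation (Corollary 3.21 of \cite{TAMS}) to introduce the Gauss-sum prefactor, whereas your Fourier expansion followed by the M\"obius involution is in effect a self-contained proof of exactly the instance of that permutation that is needed. This buys independence from Greene's transformation apparatus at the price of a longer hands-on computation. The one place you should tighten is $j=\pm1$: the collapse of the left side to $\oD(4)^2J(D,\phi)$ is immediate, but on the right side you must still evaluate $\2F1\l(\bar D, DA_4\\ A_4\ear 1\r)$; the paper does this via the finite-field Gauss summation theorem \cite[Theorem 4.9]{TAMS} followed by \eqref{eq:1.1}, and an equivalent direct route (reducing the sum in \eqref{eq:1.2} to $\oD(-1)J(DA_4,\oD^2)/q$ and then applying \eqref{eq:1.1}) should be written out rather than asserted.
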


\begin{proof}
First consider the case where $j \ne \pm1$.  By \eqref{eq:1.2},
\begin{equation}\label{eq:5.20}
h(D,j) = q\oD^4(j-1)
\2F1 \l( \bar D^2\phi,D \\ D\phi \ear \
-\left(\frac{j+1}{j-1}\right)^2  \r).
\end{equation}
Since $D \notin \{\e, A_4, \oaf \}$, we can permute the
two numerator parameters by means of
\cite[Corollary 3.21]{TAMS} to obtain
\begin{equation}\label{eq:5.21}
h(D,j):=\frac{G(D)^2 G(\phi)}{G(D^2\phi)}
\oD^4(j-1)\2F1 \l( \bar D,D^2\phi \\ D\phi \ear \
-\left(\frac{j+1}{j-1}\right)^2  \r).
\end{equation}
Now \eqref{eq:5.19} for $j \ne \pm1$ follows from \eqref{eq:1.5}.

Finally, let $j = \pm 1$. By a finite field analogue \cite[Theorem 4.9]{TAMS}
of Gauss's classical summation formula,
\begin{equation}\label{eq:5.22}
\2F1 \l( \bar D, DA_4 \\ A_4 \ear \ j^4 \r)=
\frac{D(-1)G(DA_4)G(\oD^2)}{qG(\oD A_4)}.
\end{equation}
Thus by the Hasse-Davenport formula \eqref{eq:1.1} with $A=\oD A_4$,
\begin{equation}\label{eq:5.23}
\2F1 \l( \bar D, DA_4 \\ A_4 \ear \ j^4 \r)=
\frac{\oD(4)G(\oD^2)}{G(\oD^2 \phi)G(\phi)}.
\end{equation}
From \eqref{eq:5.8} with $j = \pm1$,
\[
h(D,j) = \oD(4)^2 J(D, \phi) = \oD(4)G(D)^2/G(D^2),
\]
where the last equality follows from 
\eqref{eq:1.1}.  Together with \eqref{eq:5.23}, 
this completes the proof of \eqref{eq:5.19}
for $j = \pm 1$.
\end{proof}

\begin{lem}
If $\chi_1$ is not a fourth power, then $H(\lo)=0$.
On the other hand, if $\chi_1 = \no^4$, then
\begin{equation}\label{eq:5.24}
H(\lo)= (A_4(a)+\oaf(a))\sum_{m=0}^3 J(\no A_4^m, \phi).
\end{equation}
\end{lem}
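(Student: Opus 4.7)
The plan is to solve the constraint $\alpha(j,x)=0$ explicitly and then carry out the resulting character sum. Rewriting the constraint as $x^2(j+1)^2 = -a(j-1)^2$ shows that $j=\pm 1$ give no contribution (they would force $x=0$ or $a=0$), and that for $j\neq \pm 1$ a solution $x\in\f$ exists precisely when $-a$ is a square. When $\phi(-a)=-1$, no solutions exist, so $H(\lambda_1)=0$; at the same time $A_4(a)^2=\phi(a)=\phi(-a)=-1$ forces $A_4(a)\in\{\pm i\}$, so $A_4(a)+\oaf(a)=0$ and \eqref{eq:5.24} holds trivially on both sides in that subcase.

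Assume next $\phi(-a)=1$ and fix $b\in\f$ with $b^2=-a$, so that for each $j\neq\pm 1$ the two solutions are $x=\pm b(j-1)/(j+1)$. A short computation gives
\begin{equation*}
x - \frac{a}{x} = \frac{2x(j^2+1)}{(j-1)^2},
\end{equation*}
so $\phi(x-a/x)=\phi(2)\phi(x)\phi(j^2+1)$; summing over the two sign choices of $x$ and using $\phi(-1)=1$ collapses this to $2\phi(2b)\phi(j^4-1)$. Substituting back into \eqref{eq:5.6} and noting that $\chi_1=\lambda_1^2\phi$ yields
\begin{equation*}
H(\lambda_1) = 2\phi(2b) \sum_{j\in\f}\chi_1(j)\,\phi(j^4-1),
\end{equation*}
the values $j$ with $j^4=1$ contributing zero automatically via $\phi(j^4-1)$.

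Replacing $j$ by $ij$ multiplies the summand by $\chi_1(i)$, so the remaining sum vanishes unless $\chi_1(i)=1$, i.e., unless $\chi_1$ is a fourth power, establishing the first assertion. When $\chi_1=\nu_1^4$, the substitution $u=j^4$ together with the orthogonality $\sum_{m=0}^{3}A_4^m(u)=4\cdot\mathbf{1}_{u\in(\f)^4}$ (available because $i\in\F$) recasts the sum as $\sum_{m=0}^3 J(\nu_1 A_4^m,\phi)$, using $\phi(-1)=1$ to convert $\phi(u-1)$ into $\phi(1-u)$. The prefactor is then identified by observing that $(2b)^2=-4a$ combined with $A_4(-4)=1$ (noted in the introduction) gives $\phi(2b)=A_4((2b)^2)=A_4(-4a)=A_4(a)$; since $\phi(a)=1$ makes $A_4(a)=\oaf(a)$, we obtain $2\phi(2b)=A_4(a)+\oaf(a)$, which yields \eqref{eq:5.24}. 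The only potentially delicate point is this final sign identification, but the identity $A_4(-4)=1$ supplied by the paper settles it cleanly.
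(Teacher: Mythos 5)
Your proposal is correct and follows essentially the same route as the paper: solve $\alpha(j,x)=0$ explicitly for $x$, reduce $H(\lambda_1)$ to a constant times $\sum_{j}\chi_1(j)\phi(j^4-1)$, use the substitution $j\mapsto ij$ for the vanishing claim, and expand via $\sum_{m=0}^3 A_4^m$ into Jacobi sums. The only difference is cosmetic: the paper absorbs the case distinction on $\phi(-a)$ into the single prefactor $A_4(-a)+\oaf(-a)$ from the outset, while you treat the two cases separately before identifying $2\phi(2b)=A_4(a)+\oaf(a)$ via $A_4(-4)=1$, exactly as the paper does.
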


\begin{proof}
We have
\begin{equation*}
\begin{split}
H(\lo)&=(A_4(-a)+\oaf(-a)) \sum_{j \ne \pm 1} 
\chi_1(j)\phi(\frac{j-1}{j+1}+\frac{j+1}{j-1}) \\
&=(A_4(-a)+\oaf(-a)) \sum_{j \in \f} \chi_1(j)\phi(2)
\phi(j^2+1)\phi(j^2-1) \\
&=(A_4(a)+\oaf(a)) \sum_{j \in \f} \chi_1(j)\phi(j^4 -1).
\end{split}
\end{equation*} 
If $\chi_1$ is not a fourth power, then replacement of $j$ by $ji$  shows that
$H(\lo)=0$.  On the other hand, 
if $\chi_1 = \no^4$, then
\begin{equation*}
H(\lo)=(A_4(a)+\oaf(a)) \sum_{j \in \f}\no(j)\phi(1-j)
\sum_{m=0}^3 A_4^m(j),
\end{equation*}
which proves \eqref{eq:5.24}
\end{proof}

\begin{lem}
If $\mu^4$ is 
nontrivial and $\chi_1$ is not a fourth power, then $T=0$.
\end{lem}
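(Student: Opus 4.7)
The plan is to revisit identity (5.15) and show that each summand on its right vanishes under the two hypotheses. Since $\mu^4 \neq \e$, both $\delta(\mu^4)$ terms in (5.15) disappear at once, leaving
\[
G(\phi)\,T \;=\; \omu(a)\sum_{m=0}^{3} G(\mu^{2}\phi^{m+1})\,\oaf^{m}(a)\sum_{j \in \f}\chi_1(j)\,h(\mu A_4^{m},j).
\]
It therefore suffices to prove that for every $m \in \{0,1,2,3\}$ the inner sum $\sum_{j\in\f}\chi_1(j)\,h(\mu A_4^{m},j)$ vanishes.

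First I would check that Lemma 5.1 applies to each of the four characters $D = \mu A_4^m$. The lemma excludes only $D \in \{\e, A_4, \oaf\}$, and a short case analysis (running $m$ over $0,1,2,3$) shows that $\mu A_4^{m}$ lies in this set if and only if $\mu \in \{\e, A_4, \phi, \oaf\}$; but every such $\mu$ satisfies $\mu^{4} = \e$, contradicting our assumption. Hence Lemma 5.1 yields, for each $m$,
\[
h(\mu A_4^{m}, j) \;=\; \frac{G(\mu A_4^{m})^{2}G(\phi)}{G(\mu^{2} A_4^{2m}\phi)}\,
\2F1 \l( \bar \mu A_4^{m},\, \mu A_4^{m+1} \\ A_4 \ear \ j^{4} \r),
\]
valid for all $j \in \f$.

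The decisive step is to observe that the definition (1.2) makes $\2F1(A,B;C;\,j^{4})$ depend on $j$ only through $j^{4}$. Consequently the map $j \mapsto ij$ leaves $h(\mu A_4^{m},j)$ invariant, and since $j \mapsto ij$ is a bijection of $\f$, applying it inside the inner sum gives
\[
\sum_{j \in \f}\chi_1(j)\,h(\mu A_4^{m},j) \;=\; \chi_1(i)\sum_{j \in \f}\chi_1(j)\,h(\mu A_4^{m},j).
\]
A character $\chi_1$ on $\f$ is a fourth power precisely when $\chi_1(i)=1$, so the hypothesis that $\chi_1$ is not a fourth power forces $\chi_1(i) \neq 1$; hence each inner sum is zero, and $T=0$ follows.

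The only real obstacle I anticipate is the initial bookkeeping to verify that none of the four exceptional cases of Lemma 5.1 is triggered, but as noted this reduces to the elementary fact that $\mu^{4} \neq \e$ forbids $\mu$ from lying in the four-element subgroup $\{\e, A_4, \phi, \oaf\}$. Once that is handled, the entire argument is just Lemma 5.1 followed by the substitution $j \mapsto ij$.
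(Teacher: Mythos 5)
Your proof is correct and follows essentially the same route as the paper: drop the $\delta(\mu^4)$ terms in \eqref{eq:5.15}, invoke Lemma 5.1 (after checking its hypothesis $D \notin \{\e, A_4, \oaf\}$, which your case analysis handles correctly), and use the substitution $j \mapsto ij$ together with $\chi_1(i) \neq 1$ to kill each inner sum. Your write-up merely makes explicit the bookkeeping that the paper's two-sentence proof leaves implicit.
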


\begin{proof}
Suppose that $\mu$ is not a power of $A_4$. It follows from Lemma 5.1
that each expression $h(\mu A_4^m, j)$ in \eqref{eq:5.15}
is unchanged when $j$ is replaced by $ji$.
If $\chi_1$ is not a fourth power, then
each sum on $j$ in \eqref{eq:5.15} vanishes,
and hence $T$ vanishes.
\end{proof}

\begin{thm}
$T(\chi_1, \chi_2)=0$ when
the characters $\chi_1$ and $\chi_2$ are not both fourth powers.
\end{thm}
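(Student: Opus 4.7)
The plan is to dispose of the single remaining subcase left open by Lemma 5.3, namely when $\mu^4$ is trivial. I would first invoke the symmetry of $T(\chi_1, \chi_2)$ in its two arguments to reduce, without loss of generality, to the assumption that $\chi_1$ is not a fourth power. The reductions preceding \eqref{eq:5.15} then give $T = 0$ unless both $\chi_i$ are squares and $\lo\lt\phi = \mu^2$, and Lemma 5.3 disposes of the subcase $\mu^4 \ne \e$. Thus I only have to treat $\mu \in \{\e, A_4, \phi, \oaf\}$.

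In this subcase $\delta(\mu^4) = 1$, and Lemma 5.2 gives $H(\lo) = 0$ since $\chi_1$ is not a fourth power. Equation \eqref{eq:5.15} therefore simplifies to
\begin{equation*}
G(\phi)\bigl(T - (2q-2)\bigr) = \omu(a) \sum_{m=0}^{3} G(\mu^2 \phi^{m+1}) \oaf^m(a) \sum_{j \in \f} \chi_1(j)\, h(\mu A_4^m, j).
\end{equation*}
As $m$ varies over $\{0,1,2,3\}$, the character $\mu A_4^m$ traverses $\{\e, A_4, \phi, \oaf\}$. The crux of the argument is that for each $D \in \{A_4, \phi, \oaf\}$, the function $h(D, j)$ is a constant plus a function of $j^4$ alone: this is immediate from \eqref{eq:5.17}--\eqref{eq:5.18} when $D = A_4$ or $\oaf$, and follows from Lemma 5.1 (whose hypothesis $\phi \notin \{\e, A_4, \oaf\}$ is satisfied) when $D = \phi$, since the right side of \eqref{eq:5.19} is literally a constant times $\2F1(\bar D, DA_4; A_4; j^4)$.

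Since $\chi_1$ is a square (so $\chi_1(i)^2 = \chi_1(-1) = 1$) but not a fourth power (so $\chi_1(i) \ne 1$), we have $\chi_1(i) = -1$. Combined with $\sum_j \chi_1(j) = 0$ (as $\chi_1$ is nontrivial), the substitution $j \mapsto ij$ then annihilates $\sum_j \chi_1(j) h(D, j)$ for each $D \in \{A_4, \phi, \oaf\}$. Only the term corresponding to the unique $m_*$ with $\mu A_4^{m_*} = \e$ survives; at that index, \eqref{eq:5.16} yields $\sum_j \chi_1(j) h(\e, j) = 2 - 2q$, and a short bookkeeping check based on $\mu = A_4^{-m_*}$ gives $\omu(a) \oaf^{m_*}(a) = 1$ and $G(\mu^2 \phi^{m_*+1}) = G(\phi)$. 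The right-hand side of the displayed equation thus collapses to $(2-2q)G(\phi)$, forcing $T = 0$.

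The main obstacle is the structural claim that $h(\phi, j)$ depends on $j$ only through $j^4$. This is precisely where Lemma 5.1 earns its keep: the $\2F1$ transformation \eqref{eq:1.5} that underlies it is what converts the a priori $j$-dependence into a clean $j^4$-dependence. Without this transformation the three non-trivial sums would not obviously annihilate, and the symmetry argument via $j \mapsto ij$ would fail.
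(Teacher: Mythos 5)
Your proposal is correct and follows essentially the same route as the paper: reduce by symmetry to $\chi_1$ not a fourth power, use Lemma 5.3 to restrict to $\mu^4$ trivial, observe via \eqref{eq:5.16}--\eqref{eq:5.19} that $h(\mu A_4^m,j)$ is invariant under $j\mapsto ij$ except at the index where $\mu A_4^m=\e$, and evaluate the surviving term using $\chi_1(i)=-1$. The bookkeeping ($\omu(a)\oaf^{m_*}(a)=1$, $G(\mu^2\phi^{m_*+1})=G(\phi)$, and $\sum_j\chi_1(j)h(\e,j)=2-2q$) matches the paper's \eqref{eq:5.25}--\eqref{eq:5.26} exactly.
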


\begin{proof}
Since $T(\chi_1, \chi_2)$ is symmetric in 
the arguments $\chi_1$, $\chi_2$,
it suffices to prove that $T=0$ under the
assumption that $\chi_1$ is not a fourth power.
In view of Lemma 5.3, we may also assume that
$\mu^4$ is trivial, i.e., $\mu A_4^n$ is trivial
for some $n \in \{0,1,2,3\}$.
By \eqref{eq:5.16}--\eqref{eq:5.19},
$h(\mu A_4^m, j)$ is unchanged when $j$
is replaced by $ji$, unless $m=n$.
Since $H(\lo)=0$ by Lemma 5.2, it follows from \eqref{eq:5.15}
and \eqref{eq:5.16} that
\begin{equation}\label{eq:5.25}
G(\phi)T = G(\phi)(2q-2) + G(\phi)
\sum_{j \in \f}\chi_1(j)\{-2 + q\delta(j^2,-1)+\delta(j^2,1)\}.
\end{equation}
Thus
\begin{equation}\label{eq:5.26}
T = (2q-2) + q\chi_1(i) + q\chi_1(-i)+\chi_1(1)+\chi_1(-1).
\end{equation}
Since $\chi_1$ is a square by \eqref{eq:5.2},
\[
\chi_1(i) = \chi_1(-i)= -1, \quad \chi_1(-1) = \chi_1(1)= 1,
\]
so that \eqref{eq:5.26} yields the desired result $T=0$.
\end{proof}

The next theorem evaluates
$T(\chi_1, \chi_2)$ when
$\chi_1$ and $\chi_2$ are both fourth powers.
By \eqref{eq:5.2}, we may assume that
\begin{equation}\label{eq:5.27}
\lambda_i A_4 = \nu_i^2, \ \chi_i = \nu_i^4, \quad i=1,2,
\end{equation}
for characters $\no$ and $\nt$
(which are well-defined up to factors of $A_4$).  By \eqref{eq:5.12},
$(\no\nt)^2 =\mu^2$, and we may assume that
\begin{equation}\label{eq:5.28}
\mu = \no\nt,
\end{equation}
otherwise replace each $\nu_i$ with $\nu_i A_4$.

\begin{thm}
Suppose that
$\chi_i = \nu_i^4$ for $i=1,2$. Then
\begin{equation}\label{eq:5.29}
T=\frac{A_4(-a)}{q}\sum_{m=0}^3
\sum_{n=0}^3 \omu\oaf^{m+n}(a) G(\no A_4^{n-1})
G(\no A_4^n)G(\nt A_4^{m-1})G(\nt A_4^m).
\end{equation}
\end{thm}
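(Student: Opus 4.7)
The plan is to start from formula \eqref{eq:5.15}, which already reduces the evaluation of $T$ to computing, for each $m\in\{0,1,2,3\}$, the sum $\sum_{j\in\f}\chi_1(j)h(\mu A_4^m,j)$. For $m$ such that $\mu A_4^m\notin\{\e,A_4,\oaf\}$, Lemma~5.1 expresses this inner summand as a hypergeometric ${}_2F_1$. Crucially, the denominator $G((\mu A_4^m)^2\phi)$ furnished by Lemma~5.1 coincides with the prefactor $G(\mu^2\phi^{m+1})$ in \eqref{eq:5.15} (because $A_4^{2m}=\phi^m$), so these cancel, leaving the $m$-th summand of \eqref{eq:5.15} in the form
\[
\omu\oaf^m(a)\,G(\mu A_4^m)^2\,G(\phi)\sum_{j\in\f}\nu_1^4(j)\,\2F1\l(\bar {\mu A_4^m},\mu A_4^{m+1} \\ A_4 \ear\ j^4\r).
\]

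I would then decompose the $j$-sum via the identity
\[
\sum_{j\in\f}\nu_1^4(j)F(j^4)=\sum_{n=0}^3\sum_{u\in\f}\nu_1 A_4^n(u)F(u),
\]
obtained by writing the indicator of the fourth powers as $\tfrac14\sum_{n=0}^3 A_4^n$, and compute each inner Mellin-type sum directly from the definition \eqref{eq:1.2}. The substitution $w=uy$ in the resulting double sum separates it into two Jacobi sums, giving
\[
\sum_{u\in\f}\nu_1 A_4^n(u)\,\2F1\l(\bar D,DA_4 \\ A_4 \ear\ u\r)=\frac{\oD(-1)\,J(\nu_1 A_4^n,\oD)\,J(\oD,DA_4\ono A_4^{-n})}{q}
\]
for $D=\mu A_4^m$. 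Writing each Jacobi sum as a ratio of Gauss sums via $\mu=\nu_1\nu_2$ and applying $G(X)G(\overline{X})=X(-1)q$ to invert the denominator Gauss sums, one checks (after collecting signs using $X(-1)^2=1$) that
\[
G(\mu A_4^m)^2\sum_{u\in\f}\nu_1 A_4^n(u)\,\2F1(\cdots;u)=\frac{A_4(-1)}{q}G(\nu_1 A_4^{n-1})G(\nu_1 A_4^n)G(\nu_2 A_4^{m-n})G(\nu_2 A_4^{m-n+1}).
\]

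Substituting back into \eqref{eq:5.15} and canceling $G(\phi)$ on both sides, then reindexing the $m$-sum via $m'=m-n+1$ (which converts $\oaf^m(a)$ to $A_4(a)\oaf^{m'+n}(a)$ and $G(\nu_2A_4^{m-n})G(\nu_2A_4^{m-n+1})$ to $G(\nu_2A_4^{m'-1})G(\nu_2A_4^{m'})$), and using $A_4(-1)A_4(a)=A_4(-a)$, one arrives at exactly \eqref{eq:5.29}.

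The principal obstacle is the careful treatment of the degenerate cases, which come in two independent flavors. When $\mu^4=\e$, three of the four values of $m$ satisfy $\mu A_4^m\in\{\e,A_4,\oaf\}$, where Lemma~5.1 is inapplicable and must be replaced by the explicit formulas \eqref{eq:5.16}--\eqref{eq:5.18}; additionally, the $(q-1)H(\lo)$ correction in \eqref{eq:5.15} becomes active and is evaluated by Lemma~5.2, as does the $(2q-2)\delta(\mu^4)$ term. Separately, for those $(m,n)$ pairs in which $\chi_1$ or $\chi_2$ is trivial, certain Jacobi sums in the Mellin evaluation above involve trivial characters, so the identity $J(X,Y)=G(X)G(Y)/G(XY)$ must be supplemented by $J(\e,Y)=-1$, $J(X,\overline{X})=-X(-1)$, and $G(\e)=-1$. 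Verifying that all these exceptional contributions, together with a few applications of the Hasse--Davenport relation \eqref{eq:1.1}, reproduce precisely those terms of \eqref{eq:5.29} not produced by the generic calculation is the most delicate portion of the argument.
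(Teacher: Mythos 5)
Your proposal follows essentially the same route as the paper: starting from \eqref{eq:5.15}, applying Lemma 5.1 together with the definition \eqref{eq:1.2}, splitting $\chi_1(j)=\nu_1^4(j)$ into $\sum_n \nu_1 A_4^n$, reducing to products of Jacobi sums, converting to Gauss sums, and reindexing $m\mapsto m-n+1$; your generic identity agrees with the paper's \eqref{eq:5.34}. The degenerate case $\mu^4=\e$ is likewise handled in the paper exactly as you describe, via \eqref{eq:5.16}--\eqref{eq:5.18}, Lemma 5.2, and a direct computation of the four coefficients of $A_4^k(a)$.
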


\begin{proof}
First assume that $\mu^4$ is nontrivial.
By \eqref{eq:5.15},
\begin{equation}\label{eq:5.30}
G(\phi)T=
\sum_{m=0}^3 G(\mu^2\phi^{m+1})\omu\oaf^m(a)
\sum_{j \in \f} \chi_1(j)h(\mu A_4^m, j).
\end{equation}
By Lemma 5.1 and \eqref{eq:1.2},
\begin{equation}\label{eq:5.31}
h(\mu A_4^m, j)=\frac{G(\mu A_4^m)^2 G(\phi)}{qG(\mu^2 \phi^{m+1})}
\sum_{x \in \f}\mu A_4^{m+1}(x) \omu \oaf^m(x-1)\omu \oaf^m(1-xj^4).
\end{equation}
Thus
\begin{equation}\label{eq:5.32}
\begin{split}
&\sum_{j \in \f} \chi_1(j)h(\mu A_4^m, j)=
\frac{G(\mu A_4^m)^2 G(\phi)}{qG(\mu^2 \phi^{m+1})} \times \\
\times &\sum_{x \in \f}\mu A_4^{m+1}(x) \omu \oaf^m(x-1)
\sum_{n=0}^3\sum_{j \in \f} \no A_4^n(j) \omu\oaf^m(1-xj).
\end{split} 
\end{equation}
Replacing $j$ by $j/x$ on the right side of \eqref{eq:5.32},
we see from \eqref{eq:5.30} that
\begin{equation}\label{eq:5.33}
\begin{split}
T&=\sum_{m=0}^3\sum_{n=0}^3\frac{\omu\oaf^m(a)G(\mu A_4^m)^2}{q}
\sum_{x \in \f}\mu\ono A_4^{m-n+1}(x)\omu \oaf^m(x-1)
J(\no A_4^n, \omu\oaf^m)\\
&=\sum_{m=0}^3\sum_{n=0}^3\frac{\omu\oaf^m(a)G(\mu A_4^m)^2}{q}
\mu A_4^m(-1)J(\mu\ono A_4^{m-n+1}, \omu\oaf^m)
J(\no A_4^n, \omu\oaf^m).
\end{split}
\end{equation}
Since $\mu A_4^m$ is nontrivial for each $m$, \eqref{eq:5.33} yields
\begin{equation}\label{eq:5.34}
T=\sum_{m=0}^3\sum_{n=0}^3\frac{\omu\oaf^m(a)A_4(-1)}{q}
G(\no A_4^{n-1})G(\nt A_4^{m-n})G(\nt A_4^{m+1-n})G(\no A_4^{n}),
\end{equation}
in view of the first equality above \eqref{eq:1.1}.
Replacing $m$ by $m+n-1$ in \eqref{eq:5.34}, we complete the proof
of \eqref{eq:5.29} in the case that $\mu^4$ is nontrivial.

Next suppose that $\mu^4$ is trivial, i.e., $\mu$ is a power of $A_4$.
By \eqref{eq:5.15} and Lemma 5.2,
\begin{equation}\label{eq:5.35}
\begin{split}
&G(\phi)T = G(\phi)(2q-2) +(q-1)(A_4(a) + 
\oaf(a))\sum_{m=0}^3 J(\no A_4^m, \phi)\\
&-\oaf(a)\sum_{j \in \f}\chi_1(j)h(A_4,j)
-A_4(a)\sum_{j \in \f}\chi_1(j)h(\oaf,j)\\
&+G(\phi)\sum_{j \in \f}\chi_1(j)h(\e,j)
+\phi(a)G(\phi)\sum_{j \in \f}\chi_1(j)h(\phi,j).
\end{split}
\end{equation}
The formula \eqref{eq:5.35} can be rewritten as
\begin{equation}\label{eq:5.36}
T = \sum_{k=0}^3 R_k A_4^k(a),
\end{equation}
where
\begin{equation}\label{eq:5.37}
R_0 = (2q-2)+\sum_{j \in \f}\chi_1(j)h(\e,j),
\end{equation}
\begin{equation}\label{eq:5.38}
R_1 = G(\phi)^{-1}\{(q-1)\sum_{m=0}^3 J(\no A_4^m, \phi)-
\sum_{j \in \f}\chi_1(j)h(\oaf,j)\},
\end{equation}
\begin{equation}\label{eq:5.39}
R_3 = G(\phi)^{-1}\{(q-1)\sum_{m=0}^3 J(\no A_4^m, \phi)-
\sum_{j \in \f}\chi_1(j)h(A_4,j)\},
\end{equation}
\begin{equation}\label{eq:5.40}
R_2 = \sum_{j \in \f}\chi_1(j)h(\phi,j).
\end{equation}
By \eqref{eq:5.16},
\begin{equation}\label{eq:5.41}
R_0 = 4q -(2q-2)\delta(\chi_1).
\end{equation}
By \eqref{eq:5.18},
\begin{equation}\label{eq:5.42}
R_1 = G(\phi)^{-1}
\{q\sum_{m=0}^3 J(\no A_4^m, \phi) -\delta(\chi_1)(q-1)J(\oaf, \phi)\}.
\end{equation}
By \eqref{eq:5.17},
\begin{equation}\label{eq:5.43}
R_3 = G(\phi)^{-1}
\{q\sum_{m=0}^3 J(\no A_4^m, \phi) -\delta(\chi_1)(q-1)J(A_4, \phi)\}.
\end{equation}
By Lemma 5.1 and \eqref{eq:1.2},
\begin{equation}\label{eq:5.44}
R_2 = \sum_{m=0}^3 J(\ono \oaf^{m+1}, \phi)J(\no A_4^{m}, \phi).
\end{equation}
A lengthy but straightforward computation now shows that
for each $k$ in $\{0,1,2,3\}$,
\begin{equation}\label{eq:5.45}
R_k = q^{-1} A_4(-1)\sum G(\no A_4^{n-1})G(\no A_4^{n})
G(\ono A_4^{m-1})G(\ono A_4^{m}),
\end{equation}
where the sum is over all $m, n \in \{0,1,2,3\}$ for which
$\oaf^{m+n-1} = A_4^k$. 
Putting \eqref{eq:5.45} into \eqref{eq:5.36}, we obtain
\begin{equation}\label{eq:5.46}
T=\frac{A_4(-a)}{q}\sum_{m=0}^3
\sum_{n=0}^3 \oaf^{m+n}(a) G(\no A_4^{n-1})
G(\no A_4^n)G(\ono A_4^{m-1})G(\ono A_4^m).
\end{equation}
Since $\omu = A_4^\ell$ for some $\ell$, we may substitute
$\nt A_4^\ell$ for $\ono$ in \eqref{eq:5.46}.  Then upon replacing
$m$ by $m-\ell$, we complete the proof of \eqref{eq:5.29} in the case
that  $\mu^4$ is trivial.
\end{proof}

\section{Proof of Katz's identities (1.4)}

The proof for $jk=0$ was given in Section 4, so we may assume
that $jk \ne 0$.
Let 
\begin{equation}\label{eq:6.1}
S(\chi_1, \chi_2):= \sum_{j \in \f}\sum_{k \in \f}
\chi_1(j)\chi_2(k)V(j)V(k)
\end{equation}
denote the double Mellin transform of $V(j)V(k)$.
In the notation of \eqref{eq:2.1},
\begin{equation}\label{eq:6.2}
S(\chi_1, \chi_2)= S(\chi_1)S(\chi_2).
\end{equation}
If $\chi_1$ and $\chi_2$ are not both fourth powers, then
\begin{equation}\label{eq:6.3}
S(\chi_1, \chi_2)=T(\chi_1, \chi_2),
\end{equation}
since both members of \eqref{eq:6.3} vanish by Theorems 2.1 and 5.4.
On the other hand, if $\chi_1=\no^4$ and  $\chi_2=\nt^4$,
then \eqref{eq:6.3} holds by Theorems 2.1 and 5.5.
Thus the Mellin transforms of the left
and right sides of \eqref{eq:1.4} are the same for all characters.
By taking inverse Mellin transforms, we see that \eqref{eq:1.4}
holds for $jk \ne 0$, which completes the proof of \eqref{eq:1.4}.


\end{document}